\documentclass{amsart}

%%%%%%%%%%%%%%%%
%%% packages %%%
%%%%%%%%%%%%%%%%
\usepackage{amsmath}
\usepackage{amsthm}
\usepackage{amssymb}
\usepackage{amsfonts,mathrsfs}
\usepackage{stmaryrd}
\usepackage{amsxtra}  
\usepackage{epsfig}
\usepackage{verbatim}
\usepackage{xcolor}
%\usepackage{makeidx}
%\usepackage{showkeys}

%%%%%%%%%%%%%%%%%%
%%% Pagelayout %%%
%%%%%%%%%%%%%%%%%%
%\hoffset=-0.6in
%\voffset=-0.6in
%\textwidth=6in
%\textheight=9in
%\setlength{\textwidth}{5.7in}
%\setlength{\textheight}{8.3in}
%\include{header}

%%%%%%%%%%%%%%%%%%%%%%%%%%%%%%%%%%%%%%%%%%%%%%
%%% Settings for theorem-like environments %%%
%%%%%%%%%%%%%%%%%%%%%%%%%%%%%%%%%%%%%%%%%%%%%%
\theoremstyle{plain}
\newtheorem{theorem}[equation]{Theorem}

\newtheorem{lemma}[equation]{Lemma}
\newtheorem{corollary}[equation]{Corollary}
\theoremstyle{definition}

\newtheorem{example}[equation]{Example}
\theoremstyle{remark}
\newtheorem{remark}[equation]{Remark}
%\numberwithin{equation}{section}

%%%%%%%%%%%%%%%%%%%%%%%%%%%
%%% Individual Commands %%%
%%%%%%%%%%%%%%%%%%%%%%%%%%%

\newcommand{\dbar}{\bar \partial}

\newcommand{\supp}{\text{supp}\,}
\newcommand{\coloneqq}{\mathrel{\mathop:}=}
\newcommand{\eqqcolon}{=\mathrel{\mathop:}}
\DeclareMathOperator{\dist}{dist}

\begin{document}

\title[Dimension of the Bergman space]{On the dimension of the Bergman space for some unbounded domains}
\author{A.-K. Gallagher, T. Harz \& G. Herbort}
%\date{\today}
\thanks{Special thanks to the Institute of Mathematics, particularly Professor Ines Kath, at the University of Greifswald for their hospitality and support of the first author during  the academic year 2015--16. The second author was supported by the National Research Foundation of Korea (NRF) grant funded by the Korean government (MSIP) (No. 2011-0030044).}
\address{Department of Mathematics, Oklahoma State University, Stillwater, USA}
\email{anne-katrin.herbig@okstate.edu}
\address{Department of Mathematics, POSTECH, Pohang, Korea}
\email{tharz@postech.ac.kr}
\address{Department of Mathematics, University of Wuppertal, Wuppertal, Germany}
\email{gregor.herbort@math.uni-wuppertal.de}
\subjclass[2010]{32A36, 32U05}
\keywords{Bergman space, core, unbounded pseudoconvex domain}
\begin{abstract}
A sufficient condition for the infinite dimensionality of the Berg\-man space of a pseudoconvex domain is given. This condition holds on any pseudoconvex domain that has at least one smooth boundary point of finite type in the sense of D'Angelo.
\end{abstract}
\maketitle
\section{Introduction}
Let $\Omega$ be a domain in $\mathbb{C}^{n}$, $n\geq 1$. The Bergman space, $A^{2}(\Omega)$,  is the space of holomorphic functions, $\mathcal{O}(\Omega)$, on $\Omega$ which belong to $L^{2}(\Omega)$, i.e.,
  $$A^{2}(\Omega)= \big\{h \colon \Omega\to \mathbb{C}\;\text{holomorphic}\;:
  \int_{\Omega}|h|^{2} dV<\infty \big\},$$
where $dV$ denotes the Euclidean volume form. In this article, it will be shown that the dimension of the Bergman
space is infinite for a large class of unbounded pseudoconvex domains in $\mathbb{C}^{n}$.

%For domains in the complex plane, it was proven by Wiegerinck \cite{Wie84} that the Bergman space is either 0 or infinite 
%dimensional. 
The Bergman space of a domain in the complex plane was proven  to be either $0$ or infinite dimensional by Wiegerinck in \cite{Wie84}.
Moreover, a result of Carleson \cite[Theorem 1.a in \S VI]{Car67} depicts this dichotomy in terms of the logarithmic capacity of 
the complement of the given domain. That is, for any domain $\Omega\subset\mathbb{C}$, $A^{2}(\Omega)$ is 
non-trivial if and only if $\Omega^{c}$ has positive logarithmic capacity. (We note that Carleson's result in \cite{Car67} is stated for domains with compact complement, but it still holds true without this additional assumption.)

The dimension of the Bergman space is invariant under biholomorphic transformations. Indeed, each biholomorphic map $F\colon \Omega \to \Omega'$ induces an isometric isomorphism $A^{2}(\Omega') \to A^{2}(\Omega)$ via the assignment
$f \mapsto (f\circ F)\cdot\det(J_{\mathbb{C}}F)$, where $J_{\mathbb{C}}F = (\partial F_j / \partial z_k)_{j,k=1}^n$. Since the Bergman space of a bounded domain $\Omega\subset\mathbb{C}^{n}$ contains any 
 monomial in the coordinates of $z=(z_{1},\dots,z_{n})$, it follows that the Bergman space of $\Omega$, and  of any domain biholomorphic equivalent to $\Omega$, has infinite dimension. On the other hand, the fact that $A^{2}(\mathbb{C}^{n})=\{0\}$ implies that for any Fatou--Bieberbach domain, i.e., any domain biholomorphic equivalent to $\mathbb{C}^{n}$, the Bergman space is trivial.

Further results for domains in $\mathbb{C}^{n}$, $n\geq 2$, are scattered. In \cite{Wie84}, Wiegerinck 
constructed  non-pseudoconvex domains $\Omega_{k}\subset\mathbb{C}^{2}$, $k\in\mathbb{N}$, so that 
$\dim A^{2}(\Omega_{k})=k$. Whether the above dichotomy property 
holds for  Bergman spaces of pseudoconvex domains in higher dimensions is not known.
In  \cite{Juc12}, Jucha derives this property for the Bergman space of Hartogs domains of the form
$$ \Omega_\phi = \big\{(z,w)\in\mathbb{C}\times \mathbb{C}^{N}:|w|\leq e^{-\phi(z)}\big\},$$ where $\phi$ is subharmonic
on $\mathbb{C}$. Theorem 4.1 therein details the dimensionality of $A^2(\Omega_\phi)$ in terms of properties of the Riesz measure 
of $\phi$. Sufficient conditions for the infinite dimensionality of the Bergman space of further classes of Hartogs domains are 
derived or indicated in \cite[Proposition 0]{Ligocka89}, \cite[pg. 128]{PZ05}, \cite[pg. 22]{Dinew07}, \cite[Corollary 3.3]{Juc12}.
Furthermore, the Bergman space  was shown to be infinite dimensional for a certain unbounded worm domain in \cite[Proposition 1.3]
{KPS14} and non-trivial for special classes of domains in
\cite[Proposition 5.3]{AGK16}.

A general criterion for infinite dimensionality of the Bergman space follows from the proof of Theorem 1 in \cite{CZ02}: If $\Omega \subset \mathbb{C}^n$ admits a bounded continuous function $\varphi$ which is strictly plurisubharmonic on $\Omega$, then $\dim A^{2}(\Omega)=\infty$. A more refined version of this result is also mentioned in \cite{AB15}. Therein the function $\varphi$ needs to be  strictly plurisubharmonic only on some non-empty open subset of $\Omega$ in order to deduce  that $\dim A^{2}(\Omega)=\infty$.
The advantage of the latter formulation is that it allows one to conclude the existence of non-trivial functions in $A^2(\Omega)$ from local information on $b\Omega$. 

\medskip

While the result mentioned in \cite{AB15} is known among experts of the field, a concrete reference in the literature seems to be missing. The first objective of the present paper is to give a short but complete proof of its statement, see Lemma \ref{L:linearlyindependent} and Theorem \ref{T:suffinfinite}. Our version of the result will be slightly more general with respect to properties of the plurisubharmonic functions involved. That is, continuity is not required and the existence of weak singularities is permitted (in some situations this additional generality can be helpful, see, for example, Theorem 1.2 in \cite{Chen13}). The second goal of this article is to use the above result in order to obtain a number of easily checkable conditions for the existence of square-integrable holomorphic functions, see Theorem \ref{T:suffcore} below. In particular, these conditions imply certain properties for pseudoconvex neighbourhoods of domains with trivial Bergman space, see Corollary \ref{C:fatoubieberbach} and Theorem \ref{T:finitedim}.

\medskip

We now state the precise results of the article. For this purpose, let us introduce a family of plurisubharmonic functions on a given domain $\Omega\subset\mathbb{C}^{n}$ as follows:
$$\mathcal{PSH}'(\Omega) \coloneqq \big\{\varphi \colon \Omega \to [-\infty, \infty) \text{ plurisubharmonic},\, \varphi \not\equiv -\infty,\, \nu(\varphi,\,\cdot\,) \equiv 0 \big\}.$$
Here $\nu(\varphi,z)$ denotes the Lelong number of $\varphi$ in $z$. It is straightforward to observe that whenever $\varphi:\Omega\longrightarrow [-\infty,0)$ is plurisubharmonic, then $-\log(1-\varphi)$ belongs to $\mathcal{PSH}'(\Omega)$, see also part (c) of Remark \ref{R:Remark}.
To formulate a general sufficiency condition for the infinite dimensionality of  $A^2(\Omega)$, we consider the following notion of the core, $\mathfrak{c}'(\Omega)$, of $\Omega$, that is,
\begin{align*}
   \mathfrak{c}'(\Omega)\coloneqq \big\{ 
      &z\in\Omega : \text{every } \varphi \in \mathcal{PSH}'(\Omega) \text{ that is bounded from} \\ 
      &\text{above fails to be strictly plurisubharmonic near } z
    \big\}.
\end{align*}
Note that a function $\varphi$ is said to be strictly plurisubharmonic near $z$ if there exists an $\epsilon>0$ such that  $\varphi-\epsilon\left|\,\cdot\,\right|^2$ is plurisubharmonic near $z$.
Similar notions of a core appear in the works of S\l{}odkowski--Tomassini \cite{ST81}, Harvey--Lawson \cite{HL12,HL13} and 
Harz--Shcherbina--Tomassini \cite{HST14}. We note that in general $\mathfrak{c}'(\Omega)$ is different from the analogous core set with
respect to $\mathcal{C}^\infty$-smooth functions, which is considered in \cite{HST14}; see also part (d) of Remark \ref{R:Remark}.

If a point $z\in\Omega$ lies in
the complement of $\mathfrak{c}'(\Omega)$, then there exists a function $\varphi \in \mathcal{PSH}'(\Omega)$ which is
bounded from above on $\Omega$ and strictly plurisubharmonic near $z$. Each such $\varphi$ is a natural candidate for a weight
function to be used in the construction of non-trivial holomorphic functions in $L^{2}(\Omega)$ by way of H\"ormander's method for
solving the $\dbar$-equation. In fact, we obtain the following result. 
\begin{theorem}\label{T:suffinfinite}
  Let $\Omega \subset \mathbb{C}^n$ be a pseudoconvex domain such that $\Omega\setminus\mathfrak{c}'
  (\Omega)\neq\varnothing$. Then $\dim A^{2}(\Omega)=\infty$.
\end{theorem}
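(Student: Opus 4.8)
The plan is to exhibit, for every integer $m\ge 0$, a nonzero function $h_m\in A^2(\Omega)$ that vanishes to order exactly $m$ at a fixed point $z_0$; since a finite linear combination of holomorphic functions with pairwise distinct vanishing orders at $z_0$ has the vanishing order of its lowest-order summand and is therefore nonzero, such a family is linearly independent (cf.\ Lemma~\ref{L:linearlyindependent}), and $\{h_m\}_{m\ge 0}$ then witnesses $\dim A^2(\Omega)=\infty$. To begin, fix $z_0\in\Omega\setminus\mathfrak{c}'(\Omega)$. By definition of the core there is $\varphi\in\mathcal{PSH}'(\Omega)$ that is bounded from above and strictly plurisubharmonic near $z_0$; after subtracting a constant we may assume $\varphi\le 0$, and we fix $r\in(0,1)$ with $B(z_0,2r)\ssubset\Omega$ and $i\partial\dbar\varphi\ge\epsilon\, i\partial\dbar|z|^2$ on $B(z_0,2r)$. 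Choose cutoffs $\chi,\chi_1\in C_c^\infty(B(z_0,r))$ with $\chi\equiv\chi_1\equiv 1$ on $B(z_0,r/2)$, and set $g_m\coloneqq\chi\cdot(z_1-z_{0,1})^m$, so that $f_m\coloneqq\dbar g_m=(\dbar\chi)(z_1-z_{0,1})^m$ is $\dbar$-closed and supported in the annulus $\{r/2\le|z-z_0|\le r\}$.

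The heart of the matter is the choice of weight. I would solve $\dbar u_m=f_m$ on $\Omega$ by H\"ormander's theorem with respect to
\[
\psi_m\coloneqq N_m\,\varphi+2(n+m)\,\chi_1\log|z-z_0|,
\]
where $N_m>0$ is a large constant to be fixed. Three features must hold simultaneously. First, $\psi_m$ must be plurisubharmonic on $\Omega$: the cutoff $\chi_1$ localizes the logarithmic pole but forces a negative contribution $-C_0\, i\partial\dbar|z|^2$ to the Levi form on the transition annulus, with $C_0$ depending only on $\chi_1$; choosing $N_m\epsilon\ge 2(n+m)C_0+\epsilon$ and using $i\partial\dbar\varphi\ge\epsilon\, i\partial\dbar|z|^2$ there makes $i\partial\dbar\psi_m\ge\epsilon\, i\partial\dbar|z|^2$ on $\supp f_m$ and $i\partial\dbar\psi_m\ge 0$ throughout $\Omega$ (outside $B(z_0,r)$ one has $\psi_m=N_m\varphi$). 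Second, $\psi_m$ must be bounded from above: since $\varphi\le 0$, $\chi_1\ge 0$, and $\log|z-z_0|<0$ on $B(z_0,r)$ (as $r<1$), we get $\psi_m\le 0$. Third, the data norm $\int_\Omega|f_m|^2 e^{-\psi_m}$ must be finite; on $\supp f_m$ the logarithmic term is bounded, so this reduces to $\int_{\{r/2\le|z-z_0|\le r\}}e^{-N_m\varphi}<\infty$, which holds because $\nu(\varphi,\cdot)\equiv 0$ forces, via Skoda's theorem, the local integrability of $e^{-N_m\varphi}$ for every $N_m$ --- this is precisely where membership in $\mathcal{PSH}'(\Omega)$, rather than mere plurisubharmonicity, is used. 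H\"ormander's estimate then yields $u_m$ with $\int_\Omega|u_m|^2 e^{-\psi_m}\le\frac1\epsilon\int_\Omega|f_m|^2 e^{-\psi_m}<\infty$.

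Set $h_m\coloneqq g_m-u_m$. Then $\dbar h_m=f_m-f_m=0$, so $h_m$ is holomorphic; and since $\psi_m\le 0$ gives $e^{-\psi_m}\ge 1$, we have $\int_\Omega|u_m|^2\le\int_\Omega|u_m|^2 e^{-\psi_m}<\infty$, whence $u_m\in L^2(\Omega)$ and therefore $h_m\in A^2(\Omega)$. It remains to control the jet of $h_m$ at $z_0$. On $B(z_0,r/2)$ both cutoffs equal $1$, so there $\dbar u_m=f_m=0$ (hence $u_m$ is holomorphic near $z_0$) and $\psi_m=N_m\varphi+2(n+m)\log|z-z_0|$, giving $e^{-\psi_m}\ge|z-z_0|^{-2(n+m)}$. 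Thus $\int_{B(z_0,r/2)}|u_m|^2\,|z-z_0|^{-2(n+m)}<\infty$ for the holomorphic function $u_m$, and a monomial computation shows this forces $\operatorname{ord}_{z_0}u_m\ge m+1$. Since $g_m=(z_1-z_{0,1})^m$ near $z_0$, the Taylor expansion of $h_m=g_m-u_m$ at $z_0$ has lowest-order term exactly $(z_1-z_{0,1})^m$; in particular $h_m\ne 0$ and $\operatorname{ord}_{z_0}h_m=m$. The functions $\{h_m\}_{m\ge 0}$ thus have pairwise distinct vanishing orders at $z_0$ and are linearly independent by the observation above, completing the proof.

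I expect the main obstacle to be the simultaneous balancing in the construction of $\psi_m$. Localizing the pole by a cutoff is what keeps the weight bounded from above --- so that the weighted $L^2$ bound upgrades to a genuine $L^2$ bound on the \emph{unbounded} domain $\Omega$ --- but the cutoff damages plurisubharmonicity by an amount that grows with $m$; this must be repaired by scaling $\varphi$ up to $N_m\varphi$, which is harmless only because $N_m\varphi$ again lies in $\mathcal{PSH}'(\Omega)$, is still bounded from above, and is strictly plurisubharmonic with an arbitrarily large constant. Verifying that these competing requirements --- global plurisubharmonicity, boundedness from above, strict positivity on $\supp f_m$, and finiteness of the data norm through the zero-Lelong-number condition --- can all be met at once is the crux.
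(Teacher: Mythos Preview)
Your proof is correct and rests on the same engine as the paper's: H\"ormander's $L^2$ estimate with a weight of the form $N\varphi + 2k\,\chi\log|z-z_0|$, using the zero-Lelong-number hypothesis via Skoda to keep the data norm finite after scaling $\varphi$, and using $\psi\le 0$ to pass from the weighted estimate to genuine $L^2(\Omega)$ membership. The only difference is in how linear independence is arranged. The paper fixes $N$ pairwise distinct points $z_1,\dots,z_N$ in $\Omega\setminus\mathfrak{c}'(\Omega)$ and, with log poles of fixed coefficient $2n$ at each of them, produces interpolating functions $h_k(z_j)=\delta_{jk}$ (Lemma~\ref{L:linearlyindependent}). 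You instead stay at a single point $z_0$ and let the pole coefficient $2(n+m)$ grow with $m$, so that the solution $u_m$ is forced to vanish to order $\ge m+1$ and $h_m$ has exact vanishing order $m$. Both devices are standard; your version has the mild advantage of needing only a single point of $\Omega\setminus\mathfrak{c}'(\Omega)$ and no separation of points, while the paper's interpolation argument is what one reuses to get positivity of the Bergman kernel off the core and (as in Remark~\ref{R:Remark}(b), with first derivatives prescribed) to obtain a Bergman metric when $\mathfrak{c}'(\Omega)=\varnothing$.
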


\medskip

The assumption of the above theorem may be verified on a large class of domains.
As a means to verify this condition for a given domain $\Omega$, we introduce the notion of a (local) peak point for the family 
$\mathcal{PSH}^0(\Omega)$ of continuous plurisubharmonic functions on $\Omega$.
A point $p \in b\Omega$ is said to be a (local) peak point for $\mathcal{PSH}^0(\Omega)$ if there exists a continuous plurisubharmonic function $\varphi \colon U \to \mathbb{R}$ on a one-sided open neighbourhood $U \subset \Omega$ of $p$ such that $\varphi^\ast(p) = 0$ and $\varphi^\ast|_{\bar{U} \setminus \{p\}} < 0$, where $\varphi^\ast(z) \coloneqq \limsup_{z' \to z} \varphi(z')$. 

\begin{theorem}\label{T:suffcore}
Let $\Omega \subset \mathbb{C}^n$ be a domain. Then $\Omega \setminus \mathfrak{c}'(\Omega) 
\neq \varnothing$ holds true in the following cases:
%Let $M$ be a complex manifold and let $\Omega \subset M$ be a domain. Then $\Omega \setminus \mathfrak{c}'(\Omega) 
%\neq \varnothing$ holds true in the following cases:
 \begin{itemize}
   \item[(i)] There exists a point $p \in b\Omega$ which is a (local) peak point for $\mathcal{PSH}^0(\Omega)$.
   \item[(ii)] There exists a point $p \in b\Omega$ 
   near which $b\Omega$ is $\mathcal{C}^{\infty}$-smooth and pseudoconvex and such
    that it is of finite type in the sense of D'Angelo. 
   \item[(iii)] There exists a point $p \in b\Omega$ which is strictly pseudoconvex.
 \end{itemize}
\end{theorem}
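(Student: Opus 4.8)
The plan is to treat case (i) as the heart of the matter and to reduce cases (ii) and (iii) to it by exhibiting, in each of those situations, a (local) peak point for $\mathcal{PSH}^0(\Omega)$. In every case the mechanism for producing a point outside $\mathfrak{c}'(\Omega)$ will be the same: from the local data I manufacture a function $\Psi \in \mathcal{PSH}'(\Omega)$ which is bounded from above on $\Omega$ and strictly plurisubharmonic on a non-empty open subset $W \subset \Omega$; any point of $W$ then lies in $\Omega \setminus \mathfrak{c}'(\Omega)$.

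For case (i), let $\varphi \colon U \to \mathbb{R}$ be the continuous plurisubharmonic peak function on the one-sided neighbourhood $U \subset \Omega$ of $p$. After shrinking $U$ I may assume $U = \Omega \cap B(p,r)$ for some small $r>0$. The key observation is that adding a small multiple of $|z-p|^2$ makes $\varphi$ strictly plurisubharmonic without destroying the peak behaviour: setting
$$g(z) \coloneqq \varphi(z) + c\,|z-p|^2, \qquad z \in U,$$
the function $g$ is strictly plurisubharmonic on $U$ for every $c>0$, since $g - c|\,\cdot\,|^2$ differs from $\varphi$ by a pluriharmonic term. Because $\varphi^\ast|_{\bar U \setminus \{p\}} < 0$ and $\overline{\Omega \cap bB(p,r)}$ is a compact subset of $\bar U \setminus \{p\}$, upper semicontinuity yields a constant $c_0>0$ with $\varphi \le -c_0$ on a neighbourhood of $\Omega \cap bB(p,r)$. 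Choosing $c$ so small that $c\,r^2 < c_0/2$, I obtain $g \le -c_0/2$ there, so
$$\Psi(z) \coloneqq \begin{cases} \max\{g(z),\,-c_0/2\}, & z \in U, \\ -c_0/2, & z \in \Omega \setminus U, \end{cases}$$
is well defined and plurisubharmonic on all of $\Omega$, since near $\Omega \cap bB(p,r)$ both branches equal the constant $-c_0/2$. The function $\Psi$ is continuous and bounded, hence bounded from above and, having vanishing Lelong numbers, it belongs to $\mathcal{PSH}'(\Omega)$. Finally, since $\limsup_{z \to p} g(z) = \varphi^\ast(p) = 0 > -c_0/2$, the set $W \coloneqq \{z \in U : g(z) > -c_0/2\}$ is a non-empty open subset of $\Omega$ on which $\Psi = g$ is strictly plurisubharmonic; any $z_0 \in W$ lies in $\Omega \setminus \mathfrak{c}'(\Omega)$.

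It then remains to produce peak points in cases (ii) and (iii). For (iii) I would invoke the classical local holomorphic support function of Narasimhan at a strictly pseudoconvex point: there is a holomorphic $h$ on a neighbourhood $V$ of $p$ with $\re h(p)=0$ and $\re h < 0$ on $(\bar\Omega \cap V)\setminus\{p\}$, so that $\re h$ is a pluriharmonic, hence continuous plurisubharmonic, peak function and $p$ is a peak point for $\mathcal{PSH}^0(\Omega)$; case (i) then applies. For (ii) the task is to show that a point of finite D'Angelo type on a smooth pseudoconvex boundary is a (local) peak point for $\mathcal{PSH}^0(\Omega)$ or, equivalently, that near such a point there is a bounded plurisubharmonic function whose complex Hessian is positive definite on a non-empty open subset of $\Omega$.

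I expect case (ii) to be the main obstacle. Unlike (i) and (iii), it is not elementary: it rests on the finite-type theory, specifically on the existence of bounded plurisubharmonic functions with arbitrarily large complex Hessians in layers approaching a finite-type boundary point, as constructed by Catlin, together with the plurisubharmonic peak constructions of Fornæss--Sibony. Granting such a local strictly plurisubharmonic (or peak) function, the same gluing with a constant as in case (i) yields the desired point of $\Omega \setminus \mathfrak{c}'(\Omega)$, and one notes in passing that (iii) is in fact subsumed by (ii) whenever the strictly pseudoconvex point is a smooth boundary point.
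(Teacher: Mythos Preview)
Your treatment of case (i) is correct and essentially identical to the paper's: both add $c\,|z-p|^2$ to the peak function, extract a strictly negative upper bound on $\overline{\Omega} \cap b\mathbb{B}(p,r)$, and glue with a constant via a $\max$.

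For case (iii), your approach works but the paper's is more elementary and slightly more general. Instead of invoking a Narasimhan-type holomorphic support function (which wants $\mathcal{C}^2$ smoothness of $b\Omega$), the paper just takes a continuous strictly plurisubharmonic local defining function $\varphi'$ with $\Omega \cap U' = \{\varphi'<0\}$ and sets $\varphi(z) = \varphi'(z) - \varepsilon|z-p|^2$; for small $\varepsilon$ this remains plurisubharmonic, vanishes at $p$, and is strictly negative on $(\bar\Omega \cap U)\setminus\{p\}$, so $p$ is a peak point. No smoothness of the boundary is needed, and no external theorem is invoked.

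The genuine gap is case (ii). You correctly diagnose the strategy (show $p$ is a peak point and appeal to (i)) and honestly flag it as the hard step, but what you write is not a proof: Catlin's large-Hessian weights and the Forn\ae ss--Sibony peak constructions are in the right neighbourhood, yet neither reference you cite delivers the required local peak function at a finite-type point in a form you can plug in. The paper closes this gap via a specific result of Cho (1992, Theorem~3): at a smooth pseudoconvex finite-type point $p$ there is a $\mathcal{C}^2$ pseudoconvex \emph{support surface} $D=\{\rho<0\}$ with $\rho \le r$ near $p$, $\rho(p)=0$, and $\rho(z) \le -c|z-p|^K$ on $\bar\Omega$ near $p$, so that $\bar\Omega \cap bD = \{p\}$ locally. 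One then applies the Diederich--Forn\ae ss bounded exhaustion construction to $D$ to obtain $\psi = -(e^{-L|z-p|^2}\dist(z,bD))^\eta$, strictly plurisubharmonic on $D$ near $p$; since $\psi^\ast(p)=0$ and $\psi<0$ on $(\bar\Omega \cap D)\setminus\{p\}$, the point $p$ is a local peak point for $\mathcal{PSH}^0(\Omega)$, and case (i) finishes. Your closing remark that (iii) is subsumed by (ii) for smooth boundary points is correct, but of course this does not help until (ii) is actually established.
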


Observe that Theorems \ref{T:suffinfinite} and \ref{T:suffcore} immediately imply the following result, which gives a negative answer to a question raised in \cite[Question 9]{HST14}.
\begin{corollary} \label{C:fatoubieberbach}
A Fatou--Bieberbach domain cannot have a strictly pseudoconvex neighbourhood.
\end{corollary}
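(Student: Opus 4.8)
The plan is to argue by contradiction, combining the triviality of the Bergman space of a Fatou--Bieberbach domain with the monotonicity of $\dim A^2$ under inclusions. Suppose $\Omega \subset \mathbb{C}^n$ is a Fatou--Bieberbach domain that admits a strictly pseudoconvex neighbourhood, that is, a strictly pseudoconvex domain $U$ with $\Omega \subseteq U$. Since $\Omega$ is biholomorphically equivalent to $\mathbb{C}^n$ and $A^2(\mathbb{C}^n) = \{0\}$, the biholomorphic invariance of the dimension of the Bergman space recalled in the introduction gives $A^2(\Omega) = \{0\}$; in particular $\dim A^2(\Omega) = 0$.

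Next I would apply the two main theorems to $U$. A strictly pseudoconvex domain is pseudoconvex and possesses at least one strictly pseudoconvex boundary point, so Theorem \ref{T:suffcore}(iii) yields $U \setminus \mathfrak{c}'(U) \neq \varnothing$, and Theorem \ref{T:suffinfinite}, applied to the pseudoconvex domain $U$, then gives $\dim A^2(U) = \infty$.

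It remains to transport this infinitude back to $\Omega$. I would use the restriction map $A^2(U) \to A^2(\Omega)$, $f \mapsto f|_{\Omega}$, which is well defined and norm-nonincreasing because $\int_\Omega |f|^2\,dV \le \int_U |f|^2\,dV$, and which is injective: a function holomorphic on the connected open set $U$ that vanishes on the nonempty open subset $\Omega$ must vanish identically by the identity theorem. Consequently $\dim A^2(\Omega) \ge \dim A^2(U) = \infty$, contradicting $\dim A^2(\Omega) = 0$.

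I do not anticipate a genuine obstacle, since essentially all the work is carried by Theorems \ref{T:suffinfinite} and \ref{T:suffcore}. The only points demanding a moment's care are fixing the meaning of a strictly pseudoconvex neighbourhood as a strictly pseudoconvex domain containing $\Omega$, and the injectivity of the restriction map, which rests on the connectedness of $\Omega$ and $U$ together with the identity theorem.
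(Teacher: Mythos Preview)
Your argument is correct and is essentially the route the paper has in mind: the authors give no detailed proof but merely note that Theorems~\ref{T:suffinfinite} and~\ref{T:suffcore} ``immediately imply'' the corollary, and your application of these two results to the strictly pseudoconvex neighbourhood $U$, followed by the injective restriction $A^2(U)\hookrightarrow A^2(\Omega)$, is precisely the natural way to unpack that remark. The only step you add beyond the theorems---the monotonicity $\dim A^2(\Omega)\ge\dim A^2(U)$ via the identity theorem on the connected domain $U$---is standard and is indeed what makes the implication ``immediate''.
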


A domain which does not satisfy the conditions of Theorem \ref{T:suffcore} is given by $\Omega = D\times\mathbb{C}$ for any domain $D \subset \mathbb{C}$. Indeed, it follows immediately from Liouville's theorem that $\Omega = \mathfrak{c}'(\Omega)$. Note that for every $h\in A^{2}(\Omega)$ and almost every ${z} \in D$ it holds that $h(z,\cdot)\in A^{2}(\mathbb{C}) = \{0\}$. Continuity then yields $h\equiv 0$ and hence $A^{2}(\Omega)=0$. It turns out that this example is, in a sense, typical, namely, the following statement is true.

\begin{theorem} \label{T:finitedim}
  Let $\Omega \subset \mathbb{C}^n$ be a pseudoconvex domain. If $\dim A^2(\Omega) < \infty$, 
  then $\bar{\Omega}^c$ is 1-pseudoconvex. In particular, if $n=2$, then $\bar{\Omega}^c$ is pseudoconvex.
%  Let $M$ be a Stein manifold and let $\Omega \subset M$ be a pseudoconvex domain. If $\dim A^2(\Omega) < \infty$, 
%  then $\Omega^c$ is 1-pseudoconvex. In particular, if $\dim_\mathbb{C} M = 2$, then $\Omega^c$ is pseudoconvex.
%  and if $b\Omega$ is locally the graph of a continuous function, then $b\Omega$ is the union of a disjoint 
%  family $\{\gamma_\alpha\}$ of immersed complex curves $\gamma_\alpha \subset M$.
\end{theorem}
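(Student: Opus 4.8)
The plan is to combine the two preceding theorems with a computation of the complex Hessian of the boundary‑distance function of the complement. First I would reduce everything to a statement about the core: by the contrapositive of Theorem \ref{T:suffinfinite}, the hypothesis $\dim A^{2}(\Omega)<\infty$ forces $\Omega=\mathfrak{c}'(\Omega)$, i.e.\ every point of $\Omega$ lies in the core. It therefore suffices to establish the implication $\Omega=\mathfrak{c}'(\Omega)\Rightarrow\bar{\Omega}^{c}$ is $1$-pseudoconvex, and I would prove its contrapositive: if $\bar{\Omega}^{c}$ fails to be $1$-pseudoconvex, then $\Omega\setminus\mathfrak{c}'(\Omega)\neq\varnothing$.

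The bridge between the complement and the core is the observation that $\bar{\Omega}^{c}$ is $1$-pseudoconvex if and only if $b\Omega$ contains no strictly pseudoconvex point, equivalently if and only if $u\coloneqq -\log\dist(\,\cdot\,,\bar{\Omega})$ has at least two nonnegative eigenvalues of its complex Hessian at each point of $\bar{\Omega}^{c}$. This is read off from the standard identity
$$ i\partial\dbar u \;=\; \frac{1}{\delta^{2}}\, i\,\partial\delta\wedge\dbar\delta \;-\; \frac{1}{\delta}\, i\,\partial\dbar\delta, \qquad \delta\coloneqq \dist(\,\cdot\,,\bar{\Omega}). $$
The first, rank‑one, term contributes a single positive eigenvalue in the complex normal direction; since $\delta$ agrees to first order near $b\Omega$ with a defining function for $\Omega$, along the complex tangent space the eigenvalues of $i\partial\dbar u$ are, up to the positive factor $1/\delta$, the negatives of the boundary Levi‑form eigenvalues of $\Omega$. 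As $\Omega$ is pseudoconvex these tangential eigenvalues are nonpositive, and the number of strictly negative ones equals the rank of the Levi form of $b\Omega$ at the nearest boundary point. Hence $\bar{\Omega}^{c}$ fails to be $1$-pseudoconvex exactly when there is a point at which all $n-1$ tangential eigenvalues are strictly negative, i.e.\ where the Levi form of $b\Omega$ is positive definite — a strictly pseudoconvex boundary point. At such a point Theorem \ref{T:suffcore}(iii) gives $\Omega\setminus\mathfrak{c}'(\Omega)\neq\varnothing$, which is the desired contrapositive.

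The final assertion then follows by unwinding the case $n=2$: here ``at least two nonnegative eigenvalues'' means that \emph{both} eigenvalues of $i\partial\dbar u$ are nonnegative, i.e.\ $u$ is plurisubharmonic, which is precisely ordinary pseudoconvexity of $\bar{\Omega}^{c}$. Equivalently, the absence of strictly pseudoconvex points forces the single tangential Levi eigenvalue of the pseudoconvex domain $\Omega$ to vanish identically, so that $b\Omega$ is Levi‑flat and both sides are pseudoconvex.

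The main obstacle is regularity, not the eigenvalue bookkeeping. The displayed Hessian identity and the very notion of a strictly pseudoconvex point presuppose a $\mathcal{C}^{2}$ boundary, whereas neither $\delta$ nor $b\Omega$ need be smooth. The delicate part of the argument is therefore to phrase the eigenvalue count in the viscosity/distributional framework of $q$-plurisubharmonic functions (Hunt--Murray, S\l{}odkowski), and to extract from a genuine second‑order failure of $1$-pseudoconvexity of $u$ at some $q_{0}\in\bar{\Omega}^{c}$ an honest smooth strictly pseudoconvex boundary point to which Theorem \ref{T:suffcore}(iii) applies — for instance by exploiting the supporting ball $B\!\left(q_{0},\delta(q_{0})\right)\subset\bar{\Omega}^{c}$ tangent to $b\Omega$ at the nearest point, together with a localization and approximation argument. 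Carrying out this regularity reduction cleanly is where the real work lies.
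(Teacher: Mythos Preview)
Your contrapositive reduction to $\Omega\setminus\mathfrak{c}'(\Omega)\neq\varnothing$ is correct and matches the paper's strategy, but the route you take from ``$\bar\Omega^c$ not $1$-pseudoconvex'' to ``$\Omega\setminus\mathfrak{c}'(\Omega)\neq\varnothing$'' is genuinely different, and the difference is exactly where your gap lies. The paper does \emph{not} try to produce a strictly pseudoconvex boundary point. Instead it invokes the Hartogs-figure definition of $1$-pseudoconvexity directly: from an injective holomorphic $F\colon\Delta^n\to\mathbb{C}^n$ with $F(H)\subset\bar\Omega^c$ and $F(\Delta^n)\cap\bar\Omega\neq\varnothing$, it pulls back the explicit strictly plurisubharmonic function $-\log|z_n|+\varepsilon|z|^2$, shows that along $\overline{\Omega\cap F(\Delta^n)}$ this function attains a maximum at some $p\in b\Omega$, and then subtracts $\tfrac{\varepsilon}{2}|z-p|^2$ to obtain a continuous plurisubharmonic function peaking at $p$. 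This yields a local peak point for $\mathcal{PSH}^0(\Omega)$ and hence Theorem~\ref{T:suffcore}(i) applies (via Lemma~\ref{L:somewherepeak}). No regularity of $b\Omega$ is ever used.

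Your approach, by contrast, aims for Theorem~\ref{T:suffcore}(iii), and to do so you need the equivalence ``$\bar\Omega^c$ fails to be $1$-pseudoconvex $\Leftrightarrow$ $b\Omega$ has a strictly pseudoconvex point.'' You derive this from the Hessian identity for $-\log\delta$, and you correctly identify that everything in that computation---the splitting into normal and tangential eigenvalues, the sign correspondence with the Levi form of $b\Omega$, even the existence of a defining function---requires $\mathcal{C}^2$ smoothness of $b\Omega$. The trouble is that this is not a removable technicality: a strictly pseudoconvex point in the paper's sense demands a continuous strictly plurisubharmonic \emph{defining} function on a full neighbourhood, which is a much stronger conclusion than the one-sided peak function the paper actually constructs. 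Your suggested remedies (viscosity eigenvalues, the supporting ball $B(q_0,\delta(q_0))$) do not obviously close the gap, because the supporting ball lies on the \emph{wrong} side---it gives a strictly pseudoconvex supporting hypersurface for $\bar\Omega^c$, not for $\Omega$---and a second-order touching condition from outside does not by itself manufacture a two-sided strictly plurisubharmonic defining function for $\Omega$.

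In short: your eigenvalue picture is correct and instructive when $b\Omega$ is $\mathcal{C}^2$, but the regularity reduction you flag as ``where the real work lies'' is precisely the step the paper sidesteps altogether by targeting a peak point rather than a strictly pseudoconvex point.
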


Conditions (i)--(iii) in Theorem \ref{T:suffcore} are not necessary. Indeed, in Section \ref{S:example} we will give examples of pseudoconvex domains $\Omega \subset \mathbb{C}^2$ with smooth Levi-flat boundary such that $\Omega\setminus\mathfrak{c}'(\Omega)\neq\varnothing$ and $\dim A^2(\Omega) =\infty$.

\section{Proofs}
\subsection*{Proof of Theorem \ref{T:suffinfinite}}
The following variant of H\"ormander's Theorem 2.2.1' in \cite{Hormander65} will be used in the proof of Theorem \ref{T:suffinfinite}.

\begin{theorem}\label{T:Hormander}
Let $\Omega \subset \mathbb{C}^n$ be a pseudoconvex domain and let $\Phi \colon \Omega \to [-\infty, \infty)$ be plurisubharmonic. Assume that
\begin{enumerate}
 \item[(i)] $U \subset \Omega$ is open such that $\Phi - c\left|\,\cdot\,\right|^2$ is plurisubharmonic on $U$ for some constant $c>0$, and
 \item[(ii)] $v \in L^2_{(0,1)}(\Omega, \Phi)$ is a $\mathcal{C}^\infty$-smooth form such that $\bar{\partial} v = 0$ and $\supp v \subset U$.
\end{enumerate}
Then there exists a $\mathcal{C}^\infty$-smooth function $u \colon \Omega \to \mathbb{R}$ such that $\bar{\partial} u = v$ and
 \[ \int_\Omega \lvert u \rvert^2 e^{-\Phi} dV \le \frac{1}{c} \int_\Omega  \lvert v \rvert^2 e^{-\Phi} dV.  \]
\end{theorem}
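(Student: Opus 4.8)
The plan is to adapt the functional-analytic proof of H\"ormander's Theorem 2.2.1' in \cite{Hormander65}, the new feature being that strict plurisubharmonicity is needed only on the set $U$ carrying $\supp v$. Work in the weighted Hilbert spaces $L^2(\Omega, e^{-\Phi}\,dV)$ and $L^2_{(0,1)}(\Omega, e^{-\Phi}\,dV)$, and write $\dbarstar$ for the Hilbert-space adjoint of $\dbar$ with respect to these weights. By the standard existence criterion (the Hahn--Banach and Riesz representation theorems applied to the functional $\dbarstar\alpha \mapsto \langle v, \alpha\rangle$), a solution $u$ of $\dbar u = v$ obeying the asserted bound exists as soon as one proves
\[ \lvert \langle v, \alpha\rangle\rvert^2 \le \tfrac{1}{c}\Big(\int_\Omega \lvert v\rvert^2 e^{-\Phi}\,dV\Big)\,\lVert \dbarstar \alpha\rVert^2 \]
for every $(0,1)$-form $\alpha \in \mathrm{Dom}(\dbarstar)$. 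Decomposing $\alpha$ into its components in $\ker\dbar$ and its orthogonal complement, and using $\dbar v = 0$ together with $\dbar^2 = 0$, one reduces to forms $\alpha$ with $\dbar\alpha = 0$.

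For such $\alpha$ two ingredients combine. First, since $\supp v \subset U$, the Cauchy--Schwarz inequality gives $\lvert\langle v,\alpha\rangle\rvert^2 \le \big(\int_U \lvert v\rvert^2 e^{-\Phi}\big)\big(\int_U \lvert\alpha\rvert^2 e^{-\Phi}\big)$. Second, the Bochner--Kohn--Morrey--H\"ormander identity yields
\[ \int_\Omega \sum_{j,k}\Phi_{j\bar{k}}\,\alpha_j\bar{\alpha}_k\, e^{-\Phi}\,dV \le \lVert\dbar\alpha\rVert^2 + \lVert\dbarstar\alpha\rVert^2, \]
where $\Phi_{j\bar{k}} = \partial^2\Phi/\partial z_j\partial\bar{z}_k$. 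The hypothesis that $\Phi - c\lvert\,\cdot\,\rvert^2$ is plurisubharmonic on $U$ means $\sum_{j,k}\Phi_{j\bar{k}}\alpha_j\bar{\alpha}_k \ge c\lvert\alpha\rvert^2$ there, so $c\int_U \lvert\alpha\rvert^2 e^{-\Phi} \le \int_U \sum_{j,k}\Phi_{j\bar{k}}\alpha_j\bar{\alpha}_k\, e^{-\Phi}$; since $\Phi$ is plurisubharmonic on all of $\Omega$ the integrand is nonnegative, so enlarging the domain of integration from $U$ to $\Omega$ only increases the right-hand side. Chaining these inequalities with $\dbar\alpha = 0$ produces exactly the displayed a priori estimate, hence the desired $u$ (and $\int_U \lvert v\rvert^2 e^{-\Phi} = \int_\Omega \lvert v\rvert^2 e^{-\Phi}$ gives the stated constant).

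The principal obstacle is that $\Phi$ is merely plurisubharmonic, possibly non-smooth and attaining $-\infty$, whereas the Bochner--Kohn--Morrey--H\"ormander identity needs $\Phi \in \mathcal{C}^2$. I would handle this by regularization: exhaust $\Omega$ by smoothly bounded strongly pseudoconvex domains $\Omega_\nu \ssubset \Omega$ with $\Omega_\nu \nearrow \Omega$ and replace $\Phi$ by mollifications $\Phi_\nu \coloneqq \Phi \ast \chi_{\epsilon_\nu}$, which are smooth and plurisubharmonic and decrease to $\Phi$. The delicate point is that the Hessian lower bound must survive on all of $\supp v$: since mollification commutes with $\partial\dbar$ and the Euclidean form has constant coefficients, $i\partial\dbar\Phi_\nu \ge c\, i\partial\dbar\lvert\,\cdot\,\rvert^2$ holds only on the $\epsilon_\nu$-interior of $U$; but $\supp v$ is relatively closed in $\Omega$ and contained in the open set $U$, so $\supp v \cap \overline{\Omega_\nu}$ is a compact subset of $U$ and thus lies at a positive distance from $\partial U$. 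Choosing $\epsilon_\nu$ smaller than this distance (and smaller than $\dist(\overline{\Omega_\nu}, b\Omega)$) keeps the bound valid throughout $\supp v \cap \Omega_\nu$, which is all the Cauchy--Schwarz step uses. Running the smooth case on each pair $(\Omega_\nu, \Phi_\nu)$ --- where on a smoothly bounded pseudoconvex domain the Kohn--Morrey identity carries a nonnegative boundary Levi term that may be discarded --- yields solutions $u_\nu$ with $\int_{\Omega_\nu}\lvert u_\nu\rvert^2 e^{-\Phi_\nu}\,dV \le \tfrac{1}{c}\int_\Omega \lvert v\rvert^2 e^{-\Phi}\,dV$, a bound uniform in $\nu$ because $e^{-\Phi_\nu}\le e^{-\Phi}$. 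Passing to a weak $L^2_{\mathrm{loc}}$-limit $u$ of the $u_\nu$ gives $\dbar u = v$, and the estimate survives by weak lower semicontinuity of the norm on a fixed $\Omega_{\nu_0}$ combined with the monotone convergence $e^{-\Phi_\nu}\nearrow e^{-\Phi}$ as $\nu_0 \to \infty$.

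Finally, the smoothness of $u$ is automatic: from $\dbar u = v$ with $v$ smooth one gets $\Delta u = 4\sum_j \partial v_j/\partial z_j$, which is smooth, so interior elliptic regularity for the Laplacian yields $u \in \mathcal{C}^\infty(\Omega)$.
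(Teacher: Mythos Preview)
Your proof is correct and follows essentially the same route as the paper. The paper's argument is a terse pointer to H\"ormander's Theorem~2.2.1' and its proof, indicating only the single modification needed: in the basic duality inequality one replaces $\lVert f\rVert_2$ by $\lVert \chi_U f\rVert_2$ via Cauchy--Schwarz (since $\supp v\subset U$), and then H\"ormander's Theorem~2.1.4 (the Bochner--Kohn--Morrey identity) combined with plurisubharmonicity of $\Phi$ on $\Omega$ and strict plurisubharmonicity on $U$ gives the required control of $\lVert \chi_U f\rVert_2$ by $\lVert\dbarstar f\rVert_2 + \lVert\dbar f\rVert_2$. You have unpacked exactly this, including the regularization of $\Phi$ that is implicit in the reference to H\"ormander, and you have correctly identified and handled the one genuinely delicate point---that mollification preserves the Hessian lower bound only on the $\varepsilon_\nu$-interior of $U$, which suffices because the Cauchy--Schwarz step needs the bound only on $\supp v\cap\Omega_\nu$, a compact subset of $U$. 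The smoothness conclusion via elliptic regularity of $\dbar$ on functions is likewise the paper's remark~(b).
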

Here $L^2_{(0,1)}(\Omega, \Phi)$ denotes the space of $(0,1)$-forms on $\Omega$ whose coefficients have finite $L^2$-norm with respect to the weighted measure $e^{-\Phi}dV$.
\medskip

Some remarks are in order to explain the discrepancies of Theorem \ref{T:Hormander} to the cited Theorem 2.2.1' in \cite{Hormander65}.

(a) In Theorem 2.2.1' of \cite{Hormander65} strict plurisubharmonicity of the weight function on all of $\Omega$ is assumed. The proof under the above weaker hypothesis follows from a slight variation of H\"ormander's proof. That is, to show (1.1.7) in \cite{Hormander65} for $g$ with $\supp g \subset U$, first write  $\chi_U$ for the characteristic function of $U$, so that
\begin{align}\label{E:Hormanderref}  
  |(g,f)_{2}|\leq\frac{1}{\sqrt{c}}\left\|g\right\|_{2}\cdot \sqrt{c}\left\|\chi_U\cdot f\right\|_{2}
\end{align}  
follows. Then use Theorem 2.1.4 in \cite{Hormander65}  in conjunction with the plurisubharmonicity conditions of the weight function  to estimate the last term on the right hand side of \eqref{E:Hormanderref}. This yields (1.1.7) (with $Af=\chi_{U}\cdot f/\sqrt{c}$) in \cite{Hormander65}. Now proceed with H\"ormander's proof as in \cite[Theorem 2.2.1']{Hormander65}.

(b) Theorem 2.2.1' yields a solution $u$ in $L^{2}(\Omega,\Phi)$. Ellipticity of $\dbar$ on functions means that $u$ is smooth since the data $v$ in the above Theorem \ref{T:Hormander} is smooth.

\medskip

With Theorem \ref{T:Hormander} in hand, non-trivial holomorphic functions in $L^{2}$ may be constructed under the hypothesis of Theorem \ref{T:suffinfinite} as follows. 

\begin{lemma} \label{L:linearlyindependent}
  Let $\Omega \subset \mathbb{C}^n$ be a pseudoconvex domain. Let $U \subset \Omega$ be open and assume that 
  there exists a plurisubharmonic function $\varphi \colon \Omega \to [-\infty, 0)$  such that $\varphi$ is strictly 
  plurisubharmonic on $U$ and $\nu(\varphi,z) = 0$ for every $z \in U$. Then for every finite sequence
  of pairwise distinct points $z_1, z_2, \ldots, z_N \in U$  
  there exists a function $h \in A^2(\Omega)$ such that $h(z_j) = 0$ for every $j\in\{1, 2, \ldots, N-1\}$ and $h(z_N) = 1$.

\end{lemma}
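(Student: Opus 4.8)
The plan is to obtain $h$ by solving a $\dbar$-equation against a weight engineered to force vanishing at the prescribed points. First I would fix $r>0$ so small that the closed balls $\overline{B(z_j,2r)}$, $j=1,\dots,N$, are pairwise disjoint and contained in $U$, and choose $\chi\in\mathcal{C}^\infty_c(B(z_N,2r))$ with $\chi\equiv 1$ on $B(z_N,r)$. Then $v\coloneqq\dbar\chi$ is a smooth, $\dbar$-closed $(0,1)$-form whose support lies in the annulus $\overline{B(z_N,2r)}\setminus B(z_N,r)$; in particular $\supp v\subset U$ and $\supp v$ is disjoint from all of $z_1,\dots,z_N$. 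The function will be $h\coloneqq\chi-u$ with $\dbar u=v$, so that $\dbar h=0$; since $\chi\equiv 1$ near $z_N$ and $\chi\equiv 0$ near each $z_j$ with $j<N$, it then suffices to produce a solution $u\in L^2(\Omega)$ with $u(z_1)=\dots=u(z_N)=0$.

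Next I would build the weight. Set $\psi\coloneqq 2n\sum_{j=1}^N\eta_j\log\lvert z-z_j\rvert$, where $\eta_j$ equals $1$ on $B(z_j,r)$ and is supported in $B(z_j,2r)$, and put $\Phi\coloneqq K\varphi+\psi$ for a large constant $K>0$. Near each $z_j$ one has $\Phi=K\varphi+2n\log\lvert z-z_j\rvert$, and since $\varphi<0$ this gives $e^{-\Phi}\ge\lvert z-z_j\rvert^{-2n}$ on $B(z_j,r)$. On the annuli $A_j\coloneqq B(z_j,2r)\setminus B(z_j,r)$ the function $\psi$ is smooth with complex Hessian bounded below by some $-M$, while strict plurisubharmonicity of $\varphi$ on $U$ yields, by compactness of $\bigcup_j\overline{B(z_j,2r)}$ and the locality of current inequalities, a uniform $c_0>0$ with $i\partial\dbar\varphi\ge c_0\,\beta$ on a neighbourhood of this set, $\beta$ the standard form. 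Choosing $K>M/c_0$ then forces $i\partial\dbar\Phi\ge(Kc_0-M)\,\beta\ge 0$ on each $A_j$, and strictly positive there; combined with plurisubharmonicity on the balls and the identity $\Phi=K\varphi$ off $\bigcup_jB(z_j,2r)$, this shows $\Phi$ is plurisubharmonic on all of $\Omega$ and strictly plurisubharmonic on a neighbourhood $W$ of $\supp v$.

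I would then apply Theorem \ref{T:Hormander} with $\Phi$ and $W$. The datum $v$ lies in $L^2_{(0,1)}(\Omega,\Phi)$: it is smooth and compactly supported in $A_N$, where $e^{-\psi}$ is bounded and $e^{-K\varphi}$ is locally integrable because $\nu(K\varphi,\,\cdot\,)=K\nu(\varphi,\,\cdot\,)\equiv 0<2$, via Skoda's integrability theorem. Hörmander's estimate produces a smooth $u$ with $\dbar u=v$ and $\int_\Omega\lvert u\rvert^2e^{-\Phi}\,dV<\infty$. Since $\Phi$ is bounded from above, $e^{-\Phi}$ is bounded below by a positive constant, whence $u\in L^2(\Omega)$ and $h=\chi-u\in A^2(\Omega)$. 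Finally, the local bound $\int_{B(z_j,r)}\lvert u\rvert^2\lvert z-z_j\rvert^{-2n}\,dV<\infty$ forces $u(z_j)=0$ for every $j$, since $\lvert z-z_j\rvert^{-2n}$ is not integrable near $z_j$; evaluating $h=\chi-u$ gives $h(z_j)=0$ for $j<N$ and $h(z_N)=1$.

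The main obstacle is the simultaneous control of the single weight $\Phi$: the logarithmic poles that force the vanishing are introduced by cut-offs which spoil plurisubharmonicity on the annuli $A_j$, and the crux is to absorb the resulting bounded negative Hessian into the strict plurisubharmonicity of $\varphi$ by scaling with a large $K$. The second delicate point is the use of the vanishing-Lelong-number hypothesis, which enters precisely through Skoda's theorem to keep $v$ in the weighted space despite the possibly singular behaviour of $\varphi$ on $\overline{A_N}$.
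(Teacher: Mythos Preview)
Your proof is correct and follows essentially the same approach as the paper: define $v=\dbar\chi$ for a cutoff near $z_N$, build the weight $\Phi=K\varphi+2n\sum_j\eta_j\log\lvert z-z_j\rvert$, take $K$ large so that the strict plurisubharmonicity of $\varphi$ absorbs the negative Hessian of the cut-off logarithms, invoke Skoda for local integrability of $e^{-K\varphi}$, apply Theorem~\ref{T:Hormander}, and read off $u(z_j)=0$ from non-integrability of $\lvert z-z_j\rvert^{-2n}$ and $u\in L^2(\Omega)$ from the upper bound on $\Phi$. The only cosmetic difference is that the paper reuses a single radial cutoff $\chi$ both for the data and for the logarithmic poles, and takes the radius $<1$ so that $\Phi<0$ outright, whereas you use separate $\eta_j$ and appeal to the weaker fact that $\Phi$ is bounded above.
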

\begin{proof}[Proof of Lemma \ref{L:linearlyindependent}]
  Choose $\varepsilon \in (0,1)$ so small that $|z_j-z_k| > 2\varepsilon$ for $j \neq k$ and 
  such that $\mathbb{B}(z_j,\varepsilon) 
  \subset U$ for every $j\in\{1, \dots, N\}$. Moreover, let $\chi \colon \mathbb{C}^n \to [0,1]$ 
  be a smooth function such that $\chi (z)=1$ 
  for $|z|< \frac{\varepsilon}{4}$ and $\chi (z)=0$ for $|z|> \frac{3\varepsilon}{4}$. 
  
  \medskip
  
  We define the $\dbar$ data 
  $$ v \coloneqq \dbar \chi (z-z_N). $$
  Further, we introduce a weight function
  $$\Phi(z) \coloneqq K\varphi(z)+\sum_{j=1}^{N}\Bigl(2n \chi (z-z_j) \log |z-z_j| \Bigr),$$
  where the constant $K>0$ is chosen so large that $\Phi(z)$ is plurisubharmonic on $\Omega$ and
  $\Phi(z) - \lvert z \rvert^2$ is plurisubharmonic on $\mathbb{B}(z_N, \varepsilon)$. 
  Observe that $e^{-\Phi}$ is locally integrable near the support of $v$. Indeed,
  $$\nu(\varphi,z):=\liminf_{w\to z}\frac{\varphi(w)}{\log|w-z|}=0$$
  holds for $z\in U$ by assumption. Hence
  $\nu(K\varphi,\cdot) = K\nu(\varphi,\cdot) \equiv 0$ on $U$. It then follows 
  that $e^{-K\varphi} \in L^1_{\mathrm{loc}}(U)$ for every $K>0$, see \cite[Proposition 7.1]{Skoda72}.
  Thus $v \in L^2_{(0,1)}(\Omega, \Phi)$ and, by Theorem \ref{T:Hormander}, 
  it follows that there exists a solution 
  $u \in \mathcal{C}^{\infty}(\Omega)$ to $\dbar u = v$ such that 
  \begin{equation*} \label{equ:L2estimate} 
  \int_{\Omega} |u|^2e^{-\Phi} dV \leq \int_{\Omega} |v|^2e^{-\Phi} dV < \infty. 
  \end{equation*}
  Since  $e^{-\Phi}$ is not locally integrable at any of the points $z_1, z_2, \ldots, z_N$, 
  we conclude that $u(z_j)=0$ for every $j\in\{1, \dots, N\}$.
  Further, observe that $\Phi<0$, hence $u \in L^2(\Omega)$. 
  It follows that $h \coloneqq \chi(z-z_N) - u$ 
  is a function as desired.
\end{proof}

The proof of Theorem \ref{T:suffinfinite} now follows easily.

\begin{proof}[Proof of Theorem  \ref{T:suffinfinite}]
  Assume that $\Omega\setminus\mathfrak{c}'(\Omega)\neq\varnothing$. 
  Since $\mathfrak{c}'(\Omega)$ is 
  relatively closed in $\Omega$, there
  exists a non-empty open set $U \subset \Omega \setminus \mathfrak{c}'(\Omega)$. 
  Fix an arbitrary number $N \in \mathbb{N}$ and let 
  $z_1, z_2, \ldots, z_N \in U$ be pairwise distinct. By Lemma \ref{L:linearlyindependent}, 
  it follows that there exists 
  functions $h_1, h_2, \ldots, h_N \in A^2(\Omega)$ such that $h_k(z_j) = \delta_{jk}$. In 
  particular, the family 
  $\{h_j\}_{j=1}^N \subset A^2(\Omega)$ is linearly independent. 
  Since $N \in \mathbb{N}$ 
  was arbitrary, the proof is complete.  
\end{proof}

\begin{remark}\label{R:Remark}
 (a) Recall that the Bergman kernel function, $K_{\Omega}$, associated to $\Omega$, evaluated at $z\in\Omega$ is the maximum value of $|f(z)|^{2}$ for $f\in A^{2}(\Omega)$ with $\int_\Omega \lvert f \rvert^2 dV = 1$. Hence Lemma \ref{L:linearlyindependent} implies that $K_{\Omega}(z)>0$ whenever $z\in\Omega\setminus\mathfrak{c}'(\Omega)$. In \cite{Herbort83} %a family  of pseudoconvex domains is described as example domains for which the Bergman kernel function is positive on the whole domain. For 
 the domain $D=\{z\in\mathbb{C}^{3}: \text{Re}(z_{1})+|z_{2}|^{6}+|z_{2}|^{2}|z_{3}|^{2}<0\}$ was considered.  It was shown that $K_{D}(z)=0$ whenever $z_{2}=0$. Thus, $\mathfrak{c}'(D)\neq\varnothing$. It follows from Lemma \ref{L:finitetype} below that $D\setminus\mathfrak{c}'(D)\neq\varnothing$, so that $A^{2}(D)$ is infinite dimensional.
 
 (b) By using the same technique as in the proof of Lemma \ref{L:linearlyindependent}, it is easy to also construct, 
 for every $p \in U$ and every $j = 1, \ldots, n$, a function $f_{j} \in A^2(\Omega)$ such that $f_{j}(p) = 0$ and 
 $(\partial f_{j} / \partial z_k)(p) = \delta_{jk}$ (for fixed $j$, replace $v$ by $\bar{\partial}[\chi(z-p)(z_j-p_j)]$ and in 
 the definition of $\Phi$ change the factor $2n$ into $2n+1$).
 Thus it follows from a result due to Kobayashi \cite{Kobayashi59} that every pseudoconvex domain $\Omega \subset
 \mathbb{C}^n$ which satisfies the condition $\mathfrak{c}'(\Omega) = \varnothing$ possesses a Bergman metric.

 (c) Let $\varphi \colon \Omega \to [-\infty,0)$ be plurisubharmonic and assume that $\varphi$ is strictly plurisubharmonic
 on some open set $U \subset \Omega$. If $\varphi$ is locally bounded on $U$, then $\Omega \setminus \mathfrak{c}'(\Omega) 
 \neq \varnothing$. Indeed, it follows immediately that $\psi \coloneqq
 -\log(1-\varphi)$ is a bounded from above function in $\mathcal{PSH}'(\Omega)$ such that $\psi$ is strictly plurisubharmonic
 on $U$.
 
 (d) The advantage of considering $\mathfrak{c}'(\Omega)$ for a given domain $\Omega$   instead of the smooth core, $\mathfrak{c}(\Omega)$, associated to the family  of $\mathcal{C}^{\infty}$-smooth, plurisubharmonic functions  may now be observed. To that end, consider  the domain 
 $$\Omega=\left\{(z,w)\in\mathbb{C}^{2}:\log|z|+(|z|^{2}
 +|w|^{2})<C\right\}$$
 for some constant  $C$.
 Set $\varphi(z,w)=\log|z|+\frac{1}{2}(|z|^{2}+|w|^{2})-C$ and notice that $\varphi$ is plurisubharmonic and bounded from above by $-\frac{1}{2}(|z|^{2}+|w|^{2})$ on $\Omega$. By (c), it follows that 
 $\psi:=-\log(1-\varphi)\in\mathcal{PSH}'(\Omega)$. Moreover, 
 $\psi+\log(1+\frac{1}{2}(|z|^{2}+|w|^{2}))$ belongs to $\mathcal{PSH}'(\Omega)$, is negative and strictly plurisubharmonic on $\Omega$ (for the last assertion, observe that the Levi form of the second term is positive definit on $\mathbb{C}^2$). That is, $\mathfrak{c}'(\Omega)=\varnothing$. Thus, by (b), $\Omega$ possesses a Bergman metric. On the other hand, it follows from Liouville's Theorem that  the smooth core, $\mathfrak{c}(\Omega)$, must contain the complex line $\{0\}\times\mathbb{C}$. In fact, it may be shown that $\mathfrak{c}(\Omega)=\{0\}\times\mathbb{C}$, see \cite[Example 5 in Section 3]{HST14}. Hence the analogous version of 
Theorem~\ref{T:suffinfinite} for $\mathfrak{c}(\Omega)$ could not yield that $\Omega$ possesses a Bergman metric.
 \end{remark}

\medskip

\subsection*{Proof of Theorem \ref{T:suffcore}}

\begin{lemma} \label{L:somewherepeak}
Let $\Omega \subset \mathbb{C}^n$ be a domain such that at least one boundary point is a local peak point for $\mathcal{PSH}^0(\Omega)$. Then $\Omega \setminus \mathfrak{c}'(\Omega) \neq \varnothing$.
\end{lemma}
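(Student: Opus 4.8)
The plan is to produce a single point $q \in \Omega$, lying close to the peak point $p$, together with a function $\Psi \in \mathcal{PSH}'(\Omega)$ that is bounded from above and strictly plurisubharmonic near $q$; by the definition of $\mathfrak{c}'(\Omega)$ this shows at once that $q \in \Omega \setminus \mathfrak{c}'(\Omega)$. The function $\Psi$ will be obtained by gluing a perturbation of the local peak function to a constant, in the spirit of part (c) of Remark \ref{R:Remark}.

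First I would fix a peak function $\varphi \colon U \to \mathbb{R}$ for $p$ as in the definition and shrink the neighbourhood: since $U$ is a one-sided neighbourhood of $p$, there is $r > 0$ with $\overline{\Omega \cap \mathbb{B}(p,r)} \subset \bar U$, so that the peak estimate $\varphi^\ast < 0$ on $\bar U \setminus \{p\}$ applies on this set. The key quantitative input is a uniform bound on the ``shell'': the set $\overline{\Omega \cap (\mathbb{B}(p,r) \setminus \mathbb{B}(p,r/2))}$ is compact and does not contain $p$, so upper semicontinuity of $\varphi^\ast$ together with $\varphi^\ast < 0$ there yields a constant $c > 0$ with $\varphi \le -c$ on this shell. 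On the other hand, since $\varphi^\ast(p) = 0$, I can choose an interior point $q \in \Omega \cap \mathbb{B}(p, r/2)$ with $\varphi(q) > -c/2$ (note $\varphi(q) < 0$), together with a ball $\bar{\mathbb{B}}(q,s) \subset \Omega \cap \mathbb{B}(p, r/2)$.

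Next I would choose a threshold $b$ with $\lvert\varphi(q)\rvert < b < c$ and a coefficient $\epsilon > 0$ so small that $\epsilon(2r)^2 < c - b$, and define
\[
  \Psi(z) \coloneqq
  \begin{cases}
    \max\bigl\{ \varphi(z) + \epsilon \lvert z - q \rvert^2,\; -b \bigr\}, & z \in \Omega \cap \mathbb{B}(p,r), \\
    -b, & z \in \Omega \setminus \mathbb{B}(p, r/2).
  \end{cases}
\]
On the overlap, that is on the shell, the choice of $\epsilon$ forces $\varphi + \epsilon\lvert z - q\rvert^2 \le -c + \epsilon(2r)^2 < -b$, so both formulas agree and equal $-b$; hence $\Psi$ is well defined, and being locally either a maximum of two plurisubharmonic functions or a constant, it is plurisubharmonic on all of $\Omega$. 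It is bounded from above (by $\epsilon(2r)^2$) and continuous, so $\nu(\Psi, \cdot) \equiv 0$ and $\Psi \in \mathcal{PSH}'(\Omega)$. Finally $\Psi(q) = \varphi(q) > -b$, so by continuity $\Psi = \varphi + \epsilon\lvert z - q\rvert^2$ on a neighbourhood of $q$; since $\varphi$ is plurisubharmonic and $\epsilon\lvert z - q\rvert^2$ is strictly plurisubharmonic, $\Psi$ is strictly plurisubharmonic near $q$, as required.

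The main obstacle is this gluing step rather than the local strict plurisubharmonicity, which is supplied for free by the term $\epsilon\lvert z - q\rvert^2$. The difficulty is that $\varphi$ lives only on the one-sided neighbourhood $U$, which may touch $b\Omega$ at points other than $p$, so the transition to the constant $-b$ must be arranged to take place strictly inside the region where $\varphi$ already lies below the threshold. This is exactly what the shell estimate $\varphi \le -c$ and the smallness condition $\epsilon(2r)^2 < c - b$ guarantee; care is needed to keep the three parameters $c$, $b$, $\epsilon$ in the correct order so that $\Psi$ is simultaneously globally plurisubharmonic, bounded above, and equal to the strictly plurisubharmonic branch near $q$.
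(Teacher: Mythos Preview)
Your proof is correct and follows essentially the same approach as the paper: perturb the local peak function by a small quadratic term, take the maximum with a suitable negative constant, and extend trivially by that constant outside a ball around $p$. The only cosmetic differences are that the paper centres the quadratic bump at $p$ rather than at your interior point $q$, and controls the gluing via $m \coloneqq \sup\{\varphi^\ast(z) : z \in \bar\Omega \cap b\mathbb{B}(p,r)\}$ on a single sphere instead of your shell estimate; the logic and the resulting function are the same.
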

\begin{proof}
Let $\varphi \colon U \to \mathbb{R}$ be continuous and plurisubharmonic such that $\varphi^\ast(p) = 0$ and $\varphi^\ast|_{\bar{U} \setminus \{p\}} < 0$, where $U \subset \Omega$ is a one-sided open neighbourhood of a suitable point $p \in b\Omega$. Fix a constant $r>0$ such that $\Omega \cap \mathbb{B}(p,r) \subset U$, let $m \coloneqq \sup \{\varphi^\ast(z) : z \in \bar{\Omega} \cap b\mathbb{B}(p,r)\} < 0$ and choose $\varepsilon > 0$ so small that $\varepsilon r^2 < -m/2$. Then the trivial extension of $\max(\varphi|_{\Omega \cap \mathbb{B}(p,r)} + \varepsilon|z-p|^2, m/2)$ to $\Omega$ by $m/2$ defines a function $\psi \in \mathcal{PSH}'(\Omega)$ that is bounded from above on $\Omega$ and strictly plurisubharmonic on some open subset of $\Omega \cap \mathbb{B}(p,r)$. Thus $\Omega \setminus \mathfrak{c}'(\Omega) \neq \varnothing$.
\end{proof}

\medskip

It follows from work of Cho in \cite{Cho92} that each smooth boundary point of finite type in the sense of D'Angelo \cite{DAn82} is a peak point for $\mathcal{PSH}^0(\Omega)$. In particular, we have the following result.
  
\begin{lemma}\label{L:finitetype}
  Let $\Omega \subset \mathbb{C}^{n}$ be a domain
  which is $\mathcal{C}^\infty$-smooth and pseudoconvex near some boundary point $p$. If $b\Omega$ is of finite type in the sense of D'Angelo at $p$, then  $\Omega\setminus\mathfrak{c}'(\Omega)\neq\varnothing$.
%  Let $M$ be a complex manifold and let $\Omega \subset M$ be a domain such that at least
%  one boundary point is smooth and of finite type in the sense of D'Angelo.  
%  Then $\Omega\setminus\mathfrak{c}(\Omega)\neq\varnothing$.
\end{lemma}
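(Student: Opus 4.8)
The plan is to reduce Lemma~\ref{L:finitetype} to Lemma~\ref{L:somewherepeak} by showing that the hypotheses guarantee the existence of a local peak point for $\mathcal{PSH}^0(\Omega)$. As noted in the sentence immediately preceding the statement, the crucial input is the work of Cho in \cite{Cho92}, which produces, near a smooth pseudoconvex boundary point of finite type in the sense of D'Angelo, a bounded plurisubharmonic function that peaks at that point. Thus the entire argument hinges on extracting a peak function of the required form from Cho's construction and then invoking Lemma~\ref{L:somewherepeak}.

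First I would fix the boundary point $p$ and a one-sided neighbourhood $U \subset \Omega$ of $p$ on which $b\Omega$ is $\mathcal{C}^\infty$-smooth, pseudoconvex, and of finite D'Angelo type at $p$. By shrinking $U$ if necessary, the finite-type condition persists at every nearby boundary point (finite type is an open condition on the boundary), so Cho's results apply uniformly on this neighbourhood. Cho's construction yields a continuous plurisubharmonic function $\varphi$ on $U$ (or on $\Omega \cap \mathbb{B}(p,r)$ for some $r > 0$) satisfying $\varphi^\ast(p) = 0$ and $\varphi^\ast < 0$ on $\bar{U} \setminus \{p\}$; this is precisely the definition of a local peak point for $\mathcal{PSH}^0(\Omega)$ given in the excerpt. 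Once such a $\varphi$ is in hand, the point $p$ is a local peak point for $\mathcal{PSH}^0(\Omega)$, and Lemma~\ref{L:somewherepeak} immediately gives $\Omega \setminus \mathfrak{c}'(\Omega) \neq \varnothing$.

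The main obstacle is that Cho's theorem in \cite{Cho92} is not stated verbatim in the language of the local peak functions of this paper; one must verify that the peak function produced there is continuous up to the boundary near $p$, is genuinely plurisubharmonic (not merely $q$-subharmonic or defined via a bumping family), and satisfies the strict negativity $\varphi^\ast|_{\bar U \setminus \{p\}} < 0$ rather than merely $\varphi^\ast(p) = \sup \varphi^\ast$. In particular, care is needed to confirm that the peaking is \emph{local}, i.e., that the one-sided neighbourhood can be taken small and that the function need only be defined on $U \subset \Omega$, since Lemma~\ref{L:somewherepeak} only requires a one-sided local peak function. I expect this verification to amount to a careful reading and transcription of Cho's hypotheses and conclusion into the present framework, together with the standard observation that a local peak function of finite-type construction can be modified (by subtracting a small multiple of $|z - p|^2$ or by the standard maximum-gluing trick) to achieve strict negativity away from $p$ on a relatively compact neighbourhood.

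Assuming the citation delivers the peak function as expected, the proof is then a one-line appeal: $p$ is a local peak point for $\mathcal{PSH}^0(\Omega)$, so by Lemma~\ref{L:somewherepeak} we conclude $\Omega \setminus \mathfrak{c}'(\Omega) \neq \varnothing$. No further $L^2$-estimates or $\dbar$-machinery are needed at this stage, since all the analytic content has already been packaged into Lemma~\ref{L:somewherepeak} and, upstream, into Lemma~\ref{L:linearlyindependent} and Theorem~\ref{T:suffinfinite}.
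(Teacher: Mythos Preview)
Your overall strategy is correct and matches the paper's: show that $p$ is a local peak point for $\mathcal{PSH}^0(\Omega)$ and then invoke Lemma~\ref{L:somewherepeak}. The difference lies precisely in the ``obstacle'' you flag. The paper does \emph{not} extract a peak function directly from Cho's work. Instead, Theorem~3 of \cite{Cho92} (and its proof) is used to produce a \emph{pseudoconvex support surface} at $p$: a $\mathcal{C}^2$-smooth function $\rho$ on a neighbourhood $U$ of $p$ with $\rho \le r$ (where $r$ is a defining function for $\Omega$), $\rho(p)=0$, $\rho(z)\le -c|z-p|^K$ on $\bar\Omega\cap U$, and such that $D \coloneqq \{\rho<0\}$ is pseudoconvex near $p$. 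Thus $\Omega\cap U\subset D$ and $b\Omega\cap bD\cap U=\{p\}$. The actual peak function then comes from a second ingredient you do not mention, the Diederich--Forn\ae ss construction \cite{DieFor77-2}: for suitable constants $L,\eta>0$, the function $\psi \coloneqq -(e^{-L|z-p|^2}\dist(z,bD))^\eta$ is strictly plurisubharmonic on $D$ near $p$, and since $b\Omega$ meets $bD$ only at $p$, one has $\psi^\ast(p)=0$ and $\psi<0$ on $(\bar\Omega\cap D)\setminus\{p\}$. So the resolution of your anticipated obstacle is Cho $\Rightarrow$ support surface $\Rightarrow$ Diederich--Forn\ae ss $\Rightarrow$ peak function, not a direct reading of a peak function out of \cite{Cho92}.
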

 
\begin{proof} 
  Let $p\in b\Omega$ such that $b\Omega$ is pseudoconvex and of finite type at $p$, $r$ a smooth defining function for
  $\Omega$ near $p$. 
    It follows from Theorem 3 and its proof in \cite{Cho92}, that
  $\Omega$ admits a pseudoconvex support surface near $p$. That is, 
  there exists a $\mathcal{C}^{2}$-function $\rho: U \to \mathbb{R}$ on an open neighbourhood $U$ of $p$ such that
  \begin{itemize}
    \item[(a)] the gradient of $\rho$ does not vanish on $\{z\in U:\rho(z)=0\}$,
     \vspace{0.1cm}
     \item[(b)] $\rho(z) \leq r(z)$ for $z\in U$,
      \vspace{0.1cm}
    \item[(c)] $\rho(p) = 0$ and $\rho(z) \leq -c|z-p|^{K}$ for $z\in\bar{\Omega}\cap U$ 
    and some constants $c,K>0$,%\!
      \vspace{0.1cm}
    \item[(d)] $D:=\{z\in U:\rho(z)<0\}$ is pseudoconvex near $p$.
  \end{itemize}
  \vspace{0.1cm}
  Note that (b) and (c) imply that $\Omega\cap U$ is contained in $D$ and 
  $(b\Omega \cap bD) \cap U=\{p\}$. Furthermore, it follows from (a)
  and (d) that the domain $D$ is $\mathcal{C}^{2}$-bounded 
  and pseudoconvex near $p$.
  By a result of Diederich--Forn\ae ss \cite{DieFor77-2}, there exist constants
  $L,\eta>0$ such that the function $\psi := - (e^{-L|z-p|^2} \dist(z,bD) )^\eta$ on $D$
  is strictly plurisubharmonic near $p$. Since $\psi^\ast(p)=0$ and $\psi(z)<0$ for all 
  $z\in\bar{\Omega}\cap D$, it follows that $p$ is a local peak point for $\mathcal{PSH}^0(\Omega)$. 
  The claim thus follows from Lemma \ref{L:somewherepeak}. 
\end{proof}
 
\medskip 

\begin{lemma}\label{C:somewherestrict}
  Let $\Omega\subset\mathbb{C}^{n}$ be a domain which is strictly pseudoconvex near 
  at least one
  boundary point. Then $\Omega\setminus\mathfrak{c}'(\Omega)\neq\varnothing$.
\end{lemma}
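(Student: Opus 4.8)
The plan is to reduce the statement to Lemma \ref{L:somewherepeak} by showing that a strictly pseudoconvex boundary point $p$ is automatically a local peak point for $\mathcal{PSH}^0(\Omega)$. The peak function will be the real part of the classical Levi polynomial at $p$.

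First I would fix a $\mathcal{C}^\infty$-smooth local defining function $r$ for $\Omega$ near $p$ whose Levi form is positive definite on the complex tangent space at $p$. Since strict pseudoconvexity only constrains the complex Hessian on that tangent space, I would replace $r$ by $\tilde{r} \coloneqq e^{\lambda r} - 1$ (equivalently $r + \lambda r^2$) and observe that for $\lambda > 0$ sufficiently large the \emph{full} complex Hessian of $\tilde{r}$ becomes positive definite on a neighbourhood $U$ of $p$; that is,
$$\sum_{j,k=1}^n \frac{\partial^2 \tilde{r}}{\partial z_j \partial \bar{z}_k}(z)\, w_j \bar{w}_k \geq c|w|^2 \qquad \text{for all } z \in U,\ w \in \mathbb{C}^n,$$
for some $c > 0$. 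This passage from tangential to full positive definiteness is the only slightly technical point, and it is entirely standard: the extra term $\lambda|\langle \partial r, w\rangle|^2$ controls the complex-normal directions once $\lambda$ is large.

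Next I would introduce the Levi polynomial
$$P(z) \coloneqq \sum_{j=1}^n \frac{\partial \tilde{r}}{\partial z_j}(p)(z_j - p_j) + \frac{1}{2}\sum_{j,k=1}^n \frac{\partial^2 \tilde{r}}{\partial z_j \partial z_k}(p)(z_j - p_j)(z_k - p_k)$$
and Taylor-expand $\tilde{r}$ to second order about $p$, which gives
$$\tilde{r}(z) = 2\re P(z) + \sum_{j,k=1}^n \frac{\partial^2 \tilde{r}}{\partial z_j \partial \bar{z}_k}(p)(z_j - p_j)\overline{(z_k - p_k)} + o(|z-p|^2).$$
Combining this identity with the Hessian estimate and using $\tilde{r}(z) \leq 0$ on $\bar{\Omega}$, I would conclude, after shrinking $U$, that $\re P(z) \leq -\tfrac{c}{2}|z-p|^2$ for all $z \in \bar{\Omega} \cap U$, while $\re P(p) = 0$.

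Finally, since $\re P$ is the real part of a holomorphic polynomial, it is pluriharmonic, hence continuous and plurisubharmonic. Choosing $\delta > 0$ with $\overline{\mathbb{B}}(p,\delta) \subset U$ and setting $\varphi \coloneqq \re P$ on the one-sided neighbourhood $\Omega \cap \mathbb{B}(p,\delta)$, continuity of $\re P$ gives $\varphi^\ast = \re P$ there, so the estimate above yields $\varphi^\ast(p) = 0$ and $\varphi^\ast|_{\overline{\Omega \cap \mathbb{B}(p,\delta)} \setminus \{p\}} < 0$. Thus $p$ is a local peak point for $\mathcal{PSH}^0(\Omega)$, and Lemma \ref{L:somewherepeak} gives $\Omega \setminus \mathfrak{c}'(\Omega) \neq \varnothing$. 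The main obstacle, such as it is, lies solely in arranging a defining function with positive definite complex Hessian; once that is in hand the peak-function construction is routine.
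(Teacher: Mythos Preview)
Your proof is correct and follows the same overall strategy as the paper---reduce to Lemma~\ref{L:somewherepeak} by exhibiting $p$ as a local peak point for $\mathcal{PSH}^0(\Omega)$---but the construction of the peak function is different. The paper simply takes a continuous strictly plurisubharmonic local defining function $\varphi'$ (with $\Omega \cap U' = \{\varphi' < 0\}$) and sets $\varphi(z) \coloneqq \varphi'(z) - \varepsilon\lvert z-p\rvert^2$; strict plurisubharmonicity absorbs the subtracted term for small $\varepsilon$, and one immediately has $\varphi(p)=0$ and $\varphi<0$ on $(\bar\Omega\cap U)\setminus\{p\}$. This is shorter and works already when the defining function is merely continuous, which is the notion of strict pseudoconvexity the paper uses; your Levi-polynomial argument requires at least $\mathcal{C}^2$ regularity of the boundary to carry out the Taylor expansion and the $e^{\lambda r}$ trick. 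In exchange, your route yields a stronger object: $\re P$ is pluriharmonic, so $e^{P}$ is in fact a holomorphic local peak function at $p$, which is more than Lemma~\ref{L:somewherepeak} needs.
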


\begin{proof}
  Let $p \in b\Omega$ be such that $\Omega$ is strictly pseudoconvex at $p$. 
  Then there exist a neighbourhood $U' \subset \mathbb{C}^n$ of $p$ and a 
  continuous strictly plurisubharmonic function $\varphi' \colon U' \to \mathbb{R}$ 
  such that $\Omega \cap U' = \{\varphi' < 0\}$. 
  Let $U \Subset U'$ be another neighbourhood of $p$ and set $\varphi(z) \coloneqq \varphi'(z) -\varepsilon \lvert z-p \rvert^2$.
  Then, for $\varepsilon > 0$ small enough, the function $\varphi$ is plurisubharmonic on $U$ such that $\varphi(p) = 0$
  and $\varphi(z) < 0$ for every $z \in (\bar{\Omega} \cap U) \setminus \{p\}$, i.e., $p$ is a local peak point for 
  $\mathcal{PSH}^0(\Omega)$. As before, the claim thus follows from Lemma \ref{L:somewherepeak}.
\end{proof}
 
\medskip

\subsection*{Proof of Theorem \ref{T:finitedim}}
 For each $k \in \mathbb{N}$, let $\Delta^k \coloneqq \{z \in \mathbb{C}^k : \lvert z \rvert_\infty < 1\}$, 
 where $\lvert z \rvert_\infty = \max_{1 \le j \le k} \lvert z_j \rvert$. If $\bar{\Omega}^c \subset \mathbb{C}^n$ is not 
 1-pseudoconvex, then, by definition, there exists a Hartogs figure 
 $$ H = \big\{(z',z_n) \in \Delta^{n-1} \times \Delta :  \lvert z' \rvert_\infty > r_1 \text{ or } \lvert z_n \rvert < r_2\big\}, $$
 where $r_1,r_2 \in (0,1)$, and an injective holomorphic mapping $F \colon \Delta^n \to \mathbb{C}^n$ such that
 $F(H) \subset \bar{\Omega}^c$ but $F(\Delta^n) \cap \bar{\Omega} \neq \varnothing$ (see, for example, \cite{Riemenschneider67}). 
 Without loss of generality we can assume that 
 $F$ is defined in a neighbourhood of $\bar{\Delta}^n$. Set $U \coloneqq \Omega \cap F(\Delta^n)$, and, for fixed $\varepsilon > 0$, 
 let $\varphi_1 \colon \mathbb{C}^{n-1} \times \mathbb{C}^\ast \to \mathbb{R}$ be the strictly
 plurisubharmonic function given by $\varphi_1(z) \coloneqq -\log\lvert z_n \rvert + \varepsilon\lvert z \rvert^2$. If $\varepsilon$
 is chosen small enough, then $\varphi_2 \coloneqq \varphi_1 \circ F^{-1}$ attains a local maximum along $\bar{U}$ in some point $p \in
 b\Omega \cap F(\Delta^n)$ (see the proof of Theorem 3.2 in \cite{HST14} for details of this argument). Then $\varphi(z) \coloneqq \varphi_2(z) - (\varepsilon/2)\lvert z-p \rvert^2$ is plurisubharmonic 
 in a neighbourhood of $\bar{U}$ such that $\varphi(p) > \varphi(z)$ for every $z \in \bar{U} \setminus \{p\}$. In particular, $p$ is a 
 local peak point for $\mathcal{PSH}^0(\Omega)$. Hence it follows from Lemma \ref{L:somewherepeak} that $\dim A^2(\Omega) = \infty$.
 This completes the proof of Theorem \ref{T:finitedim}. \hfill $\Box$

\medskip

\section{Examples}\label{S:example}
In this section, two examples of domains in $\mathbb{C}^{2}$ with Levi-flat boundary are given for which the dimension of the Bergman space is infinite. 

\begin{example}
 The following construction mimics Example 7 of \cite{HST14}, where an unbounded strictly pseudoconvex domain with empty core was constructed.
 It follows from work of Globevnik, see Corollary 1.1 in \cite{Gl98}, that there exists a Fatou--Bieberbach 
 domain $\Omega' \subset \mathbb{C}^2_{z,w}$ such that
 $$(0,0) \in \Omega' \cap \{w=0\} \subset 
 \Delta(0,1) \times \{0\}\;\;\text{and}\;\;\overline{\Omega' \cap \{w=0\}} = \bar{\Omega}' \cap \{w=0\}.$$
 Fix $\varepsilon > 0$ and let $h \colon \bar{\Delta}(0,1+\varepsilon) \to \mathbb{R}$ 
 be a harmonic function such that 
 $$\big\{(z,w) \in \mathbb{C}^2 : |z| = 1+\varepsilon, |w| \le e^{h(z)}\big\} \subset \mathbb{C}^2 
 \setminus \bar{\Omega}';$$
 for example, we may choose $h(z) \equiv -C$ for some large enough constant $C>0$.
 Moreover, fix a biholomorphism $\Phi \colon \mathbb{C}^2 \xrightarrow{\sim} \Omega'$. Now define an open set
 \begin{align*}
    \Omega \coloneqq \Phi^{-1}\big(\Omega' \cap \big\{(z,w) \in \mathbb{C}^2 : |z| < 1+
    \varepsilon, |w| < e^{h(z)}\big\}\big),
  \end{align*}
 and, if necessary, replace $\Omega$ by its connected component containing the point $\Phi^{-1}((0,0))$. Then $\Omega$ is an 
 unbounded domain  with smooth and Levi-flat boundary. In fact, $b\Phi(\Omega) \cap \Omega'$ is an 
 open subset of the Levi-flat hypersurface $$\big\{(z,w) \in \mathbb{C}^2 : 
 |z| < 1+\varepsilon, |w| = e^{h(z)}\big\},$$ and $\lim_{z \to z_0} \Phi^{-1}(z) = \infty$ for every 
 $z_0 \in b\Phi(\Omega) \cap b\Omega'$.  However, observe that $\Phi(\Omega) \subset \mathbb{C}^2$ is bounded, 
 and thus $\mathfrak{c}'(\Omega) = \varnothing$. In particular, $\dim A^2(\Omega) = \infty$.
\end{example}    

\begin{example}
For every number $a \in \mathbb{R}$, let $[a] \in \mathbb{Z}$ denote the integral part of $a$, and let $\{a\} \coloneqq a - [a] \in [0,1)$ be the fractional part of $a$. Fix a sequence $a_1, a_2, a_3, \ldots$ of constants $a_j \ge 0$ and assume that there exists a finite subset $J \subset \mathbb{N}$ such that 
\begin{enumerate}
  \item[(a)] $\#J \ge 2$, 
    \vspace{0.1cm}
  \item[(b)] $\{a_j\} > 0$ for every $j \in J$,
    \vspace{0.1cm}
  \item[(c)] $\{\sum_{j \in J} a_j\} > 0$.
\end{enumerate} 
At the same time, fix a sequence $z_1, z_2, z_3 \ldots$ of points $z_j \in \mathbb{C}$ such that
\begin{enumerate}
  \item[(d)] $z_j \neq z_k$ for $j \neq k$, 
    \vspace{0.1cm}
  \item[(e)] $\{z_j\}_{j=1}^\infty \subset \mathbb{C}$ is discrete, and
    \vspace{0.1cm}
  \item[(f)] $\sum_{j=1}^\infty a_j\log\lvert z-z_j \rvert$ converges and is smooth on $\mathbb{C} \setminus \{z_j\}_{j=1}^\infty$.
\end{enumerate} 
For generic $C \in \mathbb{R}$, consider the function $\varphi(z) \coloneqq C + \sum_{j=1}^\infty a_j\log\lvert z-z_j \rvert$. Then
\[ \Omega \coloneqq \big\{(z,w) \in \mathbb{C}^2 : \lvert w \rvert < e^{-\varphi(z)} \big\} \]
is an unbounded domain with smooth and Levi-flat boundary. Moreover, by Theorem 4.1 of \cite{Juc12}, it follows that $\dim A^2(\Omega) = \infty$.
\end{example}

\bibliographystyle{acm}
\bibliography{References}

\begin{thebibliography}{10}

\bibitem{AB15}
{\sc Adachi, M., and Brinkschulte, J.}
\newblock A global estimate for the {D}iederich--{F}orn{\ae}ss index of weakly
  pseudoconvex domains.
\newblock {\em Nagoya Math. J. 220\/} (2015), 67--80.

\bibitem{AGK16}
{\sc Ahn, T., Gaussier, H., and Kim, K.-T.}
\newblock Positivity and completeness of invariant metrics.
\newblock {\em J. Geom. Anal. 26}, 2 (2016), 1173--1185.

\bibitem{Car67}
{\sc Carleson, L.}
\newblock {\em Selected problems on exceptional sets}.
\newblock Van Nostrand Mathematical Studies, No. 13. D. Van Nostrand Co., Inc.,
  Princeton, N.J.-Toronto, Ont.-London, 1967.

\bibitem{Chen13}
{\sc Chen, B.-Y.}
\newblock An essay on {B}ergman completeness.
\newblock {\em Ark. Mat. 51}, 2 (2013), 269--291.

\bibitem{CZ02}
{\sc Chen, B.-Y., and Zhang, J.-H.}
\newblock The {B}ergman metric on a {S}tein manifold with a bounded
  plurisubharmonic function.
\newblock {\em Trans. Amer. Math. Soc. 354}, 8 (2002), 2997--3009 (electronic).

\bibitem{Cho92}
{\sc Cho, S.}
\newblock A lower bound on the {K}obayashi metric near a point of finite type
  in {${\bf C}^n$}.
\newblock {\em J. Geom. Anal. 2}, 4 (1992), 317--325.

\bibitem{DAn82}
{\sc D'Angelo, J.~P.}
\newblock Real hypersurfaces, orders of contact, and applications.
\newblock {\em Ann. of Math. (2) 115}, 3 (1982), 615--637.

\bibitem{DieFor77-2}
{\sc Diederich, K., and Forn\ae\hspace{-.00cm}ss, J.~E.}
\newblock Pseudoconvex domains: bounded strictly plurisubharmonic exhaustion
  functions.
\newblock {\em Invent. Math. 39}, 2 (1977), 129--141.

\bibitem{Dinew07}
{\sc Dinew, {\.Z}.}
\newblock The {O}hsawa-{T}akegoshi extension theorem on some unbounded sets.
\newblock {\em Nagoya Math. J. 188\/} (2007), 19--30.

\bibitem{Gl98}
{\sc Globevnik, J.}
\newblock On {F}atou-{B}ieberbach domains.
\newblock {\em Math. Z. 229}, 1 (1998), 91--106.

\bibitem{HL12}
{\sc Harvey, F.~R., and Lawson, Jr., H.~B.}
\newblock Geometric plurisubharmonicity and convexity: an introduction.
\newblock {\em Adv. Math. 230}, 4-6 (2012), 2428--2456.

\bibitem{HL13}
{\sc Harvey, F.~R., and Lawson, Jr., H.~B.}
\newblock {$p$}-convexity, {$p$}-plurisubharmonicity and the {L}evi problem.
\newblock {\em Indiana Univ. Math. J. 62}, 1 (2013), 149--169.

\bibitem{HST14}
{\sc Harz, T., Shcherbina, N., and Tomassini, G.}
\newblock On defining functions for unbounded pseudoconvex domains.
\newblock {\em arXiv:1405.2250, v3\/}, 86 pgs.

\bibitem{Herbort83}
{\sc Herbort, G.}
\newblock \"{U}ber das {R}andverhalten der {B}ergmanschen {K}ernfunktion und
  {M}etrik f\"ur eine spezielle {K}lasse schwach pseudokonvexer {G}ebiete des
  {${\bf C}^{n}$}.
\newblock {\em Math. Z. 184}, 2 (1983), 193--202.

\bibitem{Hormander65}
{\sc H{\"o}rmander, L.}
\newblock {$L^{2}$} estimates and existence theorems for the {$\bar \partial
  $}\ operator.
\newblock {\em Acta Math. 113\/} (1965), 89--152.

\bibitem{Juc12}
{\sc Jucha, P.}
\newblock A remark on the dimension of the {B}ergman space of some {H}artogs
  domains.
\newblock {\em J. Geom. Anal. 22}, 1 (2012), 23--37.

\bibitem{Kobayashi59}
{\sc Kobayashi, S.}
\newblock Geometry of bounded domains.
\newblock {\em Trans. Amer. Math. Soc. 92}, 2 (1959), 267--290.

\bibitem{KPS14}
{\sc Krantz, S.~G., Peloso, M.~M., and Stoppato, C.}
\newblock Bergman kernel and projection on the unbounded
  {D}iederich--{F}orn{\ae}ss worm domain.
\newblock {\em \textnormal{To appear in} Annali della Scuola Normale
  Superiore\/} (2014).

\bibitem{Ligocka89}
{\sc Ligocka, E.}
\newblock On the {F}orelli-{R}udin construction and weighted {B}ergman
  projections.
\newblock {\em Studia Math. 94}, 3 (1989), 257--272.

\bibitem{PZ05}
{\sc Pflug, P., and Zwonek, W.}
\newblock Bergman completeness of unbounded {H}artogs domains.
\newblock {\em Nagoya Math. J. 180\/} (2005), 121--133.

\bibitem{Riemenschneider67}
{\sc Riemenschneider, O.}
\newblock \"Uber den Fl\"acheninhalt analytischer Mengen und die Erzeugung k-pseudokonvexer Gebiete.
\newblock {\em Invent. Math. 2}, 5 (1967), 307--331.

\bibitem{Skoda72}
{\sc Skoda, H.}
\newblock Sous-ensembles analytiques d'ordre fini ou infini dans {${\bf
  C}^{n}$}.
\newblock {\em Bull. Soc. Math. France 100\/} (1972), 353--408.

\bibitem{ST81}
{\sc Slodkowski, Z., and Tomassini, G.}
\newblock Minimal kernels of weakly complete spaces.
\newblock {\em J. Funct. Anal. 210}, 1 (2004), 125--147.

\bibitem{Wie84}
{\sc Wiegerinck, J. J. O.~O.}
\newblock Domains with finite-dimensional {B}ergman space.
\newblock {\em Math. Z. 187}, 4 (1984), 559--562.

\end{thebibliography}

\end{document}